\newtheorem{theorem}{Theorem}
\newtheorem{lemma}[theorem]{Lemma}
\newtheorem{corollary}[theorem]{Corollary}
\definecolor{light}{gray}{.91}
\newlength{\fboxsepdefault}
\newlength{\fboxsepnew}
\newlength{\SatzboxTextwidth}
\newlength{\GrauBoxTextwidth}
\newlength{\HauptsatzboxTextwidth}
\newlength{\hfillparboxwidth}
 \providecommand{\C}{{\ensuremath{\mathbf{C}}}}
 \providecommand{\E}{{\ensuremath{\mathbf{E}}}}
 \providecommand{\N}{{\ensuremath{\mathbbm{N}}}}
 \renewcommand{\P}{{\ensuremath{\mathbf{P}}}}
 \providecommand{\P}{{\ensuremath{\mathbf{P}}}}
 \providecommand{\R}{{\ensuremath{\mathbbm{R}}}}
 \providecommand{\MCG}{{\ensuremath{\mathcal G}}}
 \providecommand{\Sv} {{\ensuremath{\check{S}}}}
 \providecommand{\Zv} {{\ensuremath{\check{Z}}}}
 \providecommand{\Sh} {{\ensuremath{\hat{S}}}}
 \providecommand{\Zh} {{\ensuremath{\hat{Z}}}}
\providecommand{\rob}[1]   {{{\bigl(#1\bigr)}}}
\providecommand{\roB}[1]   {{{\Bigl(#1\Bigr)}}}
\providecommand{\sqb}[1]  {{{\bigl[#1\bigr]}}}
\providecommand{\delds}{{\ensuremath{\frac{\del}{\del s}}}}
\providecommand{\delds}{{\ensuremath{\frac{\del}{\del s}}}}
\providecommand{\delzdsz}{{\ensuremath{\frac{\del^2}{\del s^2}}}}
\providecommand{\delzdzds}{{\ensuremath{\frac{\del^2}{\del z\del s}}}}
\providecommand{\delds}{{\ensuremath{\frac{\del}{\del s}}}}
\providecommand{\delzdsz}{{\ensuremath{\frac{\del^2}{\del s^2}}}}
\providecommand{\deldz}{{\ensuremath{\frac{\del}{\del z}}}}
\providecommand{\delzdzz}{{\ensuremath{\frac{\del^2}{\del z^2}}}}
\providecommand{\floor}[1]  {{\ensuremath{\lfloor#1\rfloor}}}
\providecommand{\al}      {{\ensuremath{\alpha}}}
\providecommand{\ld}      {{\ensuremath{\lambda}}}
\providecommand{\eps}     {{\ensuremath{\varepsilon}}}
\providecommand{\limt}   {{\ensuremath{{\displaystyle \lim_{t \ra \infty}}}}}
\providecommand{\limK}   {{\ensuremath{{\displaystyle \lim_{K \ra \infty}}}}}
\providecommand{\limN}   {{\ensuremath{{\displaystyle \lim_{N \ra \infty}}}}}
\providecommand{\ra}{\rightarrow}
\providecommand{\lra}{\longrightarrow}
\providecommand{\wlimeps}%
      {{\ensuremath{\stackrel{\eps \rightarrow \infty}%
                            {\Longrightarrow}}}}
\providecommand{\varwlimn}  {\xrightarrow[n  \ra\infty]{\text{w}}}
\providecommand{\Gen}{{\ensuremath{\MCG}}}
\providecommand{\Law}[2][]{{\ensuremath{\mathcal L^{#1}\left(#2\right)}}}
\providecommand{\del}{{\ensuremath{\partial}}}
\providecommand{\clearemptydoublepage}%
    {\newpage{\pagestyle{empty}\cleardoublepage}}
\newlength{\mylen}
\def\mathclapinternal#1#2{\clap{$\mathsurround=0pt#1{#2}$}}
\def\clap#1{\hbox to 0pt{\hss#1\hss}}
\def\mathclap{\mathpalette\mathclapinternal}
\newlength{\Breit@}\newlength{\breit@}\newlength{\diff@renz}
\providecommand{\St@ckrel}[2]{%
  \settowidth{\Breit@}{\ensuremath{^{#1}}}
  \settowidth{\breit@}{\ensuremath{#2}}
  \ifthenelse{\Breit@>\breit@}{
    \setlength{\diff@renz}{(\Breit@-\breit@)/2}
    \hspace{\diff@renz}
  }{}}
\providecommand{\etStackrel}[2]{%
  \St@ckrel{#1}{#2}%
  &\stackrel{\mathclap{#1}}{#2}
  \St@ckrel{#1}{#2}}
\renewcommand{\E}{\mathbb{E}}
\renewcommand{\P}{\mathbb{P}}
\begin{document}
%
%
%
%
%
%
%
\newcommand{\Title}{\uppercase{%
Supercritical branching diffusions\\ in random environment
}}
\newcommand{\ShortTitle}{%
Supercritical branching diffusions in random environment
}
%
%
%
\newcounter{NumAuthor}
\setcounter{NumAuthor}{%
1
}
\addtocounter{NumAuthor}{-1}
\newcommand{\AuthorOne}{\uppercase{%
Martin Hutzenthaler\footnote{Research supported by the Institute for Mathematical Sciences of the National University of Singapore}
}}
\newcommand{\AuthorTwo}{\uppercase{%
}}
\newcommand{\AuthorThree}{\uppercase{%
}}
\newcommand{\AuthorFour}{\uppercase{%
}}
\newcommand{\AuthorFive}{\uppercase{%
}}
%
%
%
\newcommand{\AddressOne}{%
Department of Biology,
Gro\ss haderner Str.~2,
82152 Planegg-Martinsried,
Germany
}
\newcommand{\AddressTwo}{%
}
\newcommand{\AddressThree}{%
}
\newcommand{\AddressFour}{%
}
\newcommand{\AddressFive}{%
}
%
%
%
\newcommand{\EmailOne}{%
hutzenthaler@bio.lmu.de
}
\newcommand{\EmailTwo}{%
email.com                               
}
\newcommand{\EmailThree}{%
email.com                               
}
\newcommand{\EmailFour}{%
email.com                               
}
\newcommand{\EmailFive}{%
email.com                               
}
%
%
\setcounter{volume}{8}                  
\setcounter{year}{2011}                     
\setcounter{firstpage}{781}               
\setcounter{lastpage}{791}                
\setcounter{page}{\value{firstpage}}
%
%
%
\newcommand{\Submitted}{July 15, 2011}
\newcommand{\Accepted}{October 3, 2011}
%
%
%
\newcommand{\SubjectClassification}{%
Primary 60J80; secondary 60K37, 60J60
}
\newcommand{\Keywords}{%
Branching diffusions in random environment, BDRE, supercriticality, survival probability
}
%
%
%
\newcommand{\Abstract}{%
  Supercritical branching processes in constant environment
  conditioned on eventual extinction are known to be
  subcritical branching processes.
  The case of random environment is more subtle.
  A supercritical branching diffusion in random environment (BDRE)
  conditioned on eventual extinction of the population
  is not a branching diffusion in a homogeneous environment.
  However the law of the population size of a supercritical BDRE
  (averaged over the environment)
  conditioned on eventual extinction is equal to the law of
  the population size of a subcritical BDRE
  (averaged over the environment).
  As a consequence, supercritical BDREs
  have a phase transition which is similar to a well-known
  phase transition of subcritical branching processes in random environment.
}
\input{ecp03.sty}
%

%
%
%
\section{Introduction and main results}%
\label{sec:Introduction}
Branching processes in random environment (BPREs) have attracted
considerable interest in recent years,
see e.g.~\cite{AfanasyevEtAl2005AOP,AfanasyevEtAl2011pre,DyakonovaEtAl2004}
and the references therein.
On the one hand this is due to the more realistic model
compared with classical branching processes.
On the other hand this is due to interesting properties
such as a phase transition in the subcritical regime.
Let us recall this phase transition.
In the strongly subcritical regime, the survival
probability of a BPRE $(Z_t^{(1)})_{t\geq0}$
scales like its expectation, that is,
$\P\big(Z_t^{(1)}>0\big)\sim const \cdot\E\big(Z_t^{(1)}\big)$
as $t\to\infty$ where $const$ is some constant in $(0,\infty)$.
In the weakly subcritical regime, the survival probability
decreases at a different exponential rate.
The intermediate subcritical regime is in between
the other two cases.
Understanding the differences of these three regimes is
one motivation of the literature cited above.
The main observation of this article is a
similar phase transition in the supercritical regime.

Let us introduce the model.
We consider a diffusion approximation of BPREs as this is
mathematically more convenient.
The diffusion approximation of BPREs
is due to Kurtz (1978)\nocite{Kurtz1978}
and had been conjectured (slightly inaccurately) by Keiding (1975)\nocite{Keiding1975}.
We follow
B\"o\-ing\-hoff and Hutzenthaler (2011)\nocite{BoeinghoffHutzenthaler2011pre}
and denote this diffusion approximation
as {\em branching diffusion in random environment} (BDRE).
For every $n\in\N:=\{1,2,\ldots\}$, let $(Z_k^{(n)})_{k\in\N_0}$ be a branching process
in the random environment $\big(Q_1^{(n)},Q_2^{(n)},\ldots\big)$
which is a sequence of independent, identically distributed offspring
distributions. If $m\big(Q_k^{(n)}\big)$ denotes the mean offspring number
for $k\in\N$, then $S_k^{(n)}:=\sqrt{n}\sum_{i=1}^{k-1}\log\big(m\big(Q_i^{(n)}\big)\big)$,
$k\in\N_0:=\{0,1,2,\ldots\}$ denotes the associated random walk where $n\in\N$.
Set $\floor{t}:=\max\{m\in\N_0\colon m\leq t\}$ for $t\geq0$.
Let the environment
be such that $\big(S_{\lfloor tn \rfloor}^{(n)}/\sqrt{n}\big)_{t\geq0}$
converges to a Brownian motion $(S_t)_{t\geq0}$ with infinitesimal drift $\al\in\R$
and infinitesimal standard deviation $\sigma_e\in[0,\infty)$ as $n\to\infty$.
Furthermore assume that the mean offspring variance converges to $\sigma_b^2\in[0,\infty)$,
that is,
\begin{equation}
  \lim_{n\to\infty}\E\left[\sum_{k=0}^\infty \left(k-m\left(Q_1^{(n)}\right)\right)^2 Q_1^{(n)}(k)\right]
  =\sigma_b^2.
\end{equation}
If $Z_0^{(n)}/n\to z\in[0,\infty)$ as $n\to\infty$ and if a third moment condition holds, then
  \begin{equation} \label{eq:diffusion.approximation}
    \roB{\frac{Z_{\lfloor tn \rfloor}^{(n)}}{n},
         \frac{S_{\lfloor tn \rfloor}^{(n)}}{\sqrt{n}}}_{t\geq0}
    \varwlimn\rob{Z_t,S_t}_{t\geq0}
  \end{equation}
  in the Skorohod topology (see e.g.~\cite{EthierKurtz1986}) where the limiting diffusion is
  the unique solution of the stochastic differential equations (SDEs)
\begin{equation}  \begin{split}  \label{eq:BDRE}
 dZ_t&=\frac 12 \sigma_e^2 Z_t dt+Z_t dS_t+\sqrt{\sigma_b^2 Z_t}dW^{(b)}_t\\
 dS_t&=\alpha dt +\sqrt{\sigma_e^2}dW^{(e)}_t
\end{split}     \end{equation}
for $t\geq0$ where $Z_0=z$ and $S_0=0$.
The processes $(W^{(b)}_{t})_{t\geq0}$
and $(W^{(e)}_{t})_{t\geq0}$ are independent standard Brownian motions.
Throughout the paper the notations $\P^{z}$ and $\E^{z}$ refer to $Z_0=z$ and $S_0=0$
for $z\in[0,\infty)$.
The diffusion approximation~\eqref{eq:diffusion.approximation}
is due to Kurtz (1978)\nocite{Kurtz1978} (see also~\cite{BoeinghoffHutzenthaler2011pre}).
Note that the random environment affects the limiting diffusion
only through
the mean branching variance $\sigma_b^2$ and through the associated random walk.

We denote the process $(S_t)_{t\geq0}$ as {\em associated Brownian motion}.
This process plays a central role.
For example it determines the conditional expectation of $Z_t$
\begin{equation}  \label{eq:expectation_BDRE}
  \E^{z}\sqb{Z_t|(S_s)_{s\leq t}}
  =z\exp\rob{S_t}
\end{equation}
for every $z\in[0,\infty)$ and $t\geq0$.
Moreover the infinitesimal drift $\al$ of the associated Brownian motion
determines the type of criticality.
The BDRE~\eqref{eq:BDRE} is supercritical (i.e. positive survival probability)
if $\al>0$,
critical if $\al=0$ and
subcritical if $\al<0$,
see Theorem 5 of B\"oinghoff and Hutzenthaler (2011)\nocite{BoeinghoffHutzenthaler2011pre}.
We will refer to $\al$ as criticality parameter.

Afanasyev (1979)\nocite{Afanasyev1979} was the first
to discover different regimes for the survival probability
of a BPRE
in the
subcritical regime
(see \cite{AfanasyevEtAl2005AOP,AfanasyevEtAl2005SPA, Vatutin2004,AfanasyevEtAl2011pre}
for recent articles).
The following characterisation for the BDRE~\eqref{eq:BDRE} is
due to B\"oinghoff and Hutzenthaler (2011)\nocite{BoeinghoffHutzenthaler2011pre}.
The survival probability of $(Z_t)_{t\geq0}$ decays like the expectation,
that is,
$\P(Z_t>0)\sim const\cdot\E(Z_t)=const\cdot \exp\big((\al+\tfrac{\sigma_e^2}{2})t\big)$ as $t\to\infty$,
if and only if $\al<-\sigma_e^2$
(strongly subcritical regime).
In the intermediate subcritical regime $\al=-\sigma_e^2$,
we have that $\P(Z_t>0)\sim const\cdot t^{-\frac{1}{2}} \exp\rob{-\frac{\sigma_e^2}{2}t}$
as $t\to\infty$.
Finally the survival probability decays like $\P(Z_t>0)\sim const\cdot
t^{-\frac{3}{2}}\exp\rob{-\frac{\al^2}{2\sigma_e^2}t}$ as $t\to\infty$
in the weakly subcritical regime $\al\in(-\sigma_e^2,0)$.

This article concentrates on the supercritical regime $\al>0$.
Our main observation is that there is a phase transition which
is similar to the subcritical regime.
Such a phase transition has not been reported
for BPREs yet.
We condition on the event $\{Z_\infty=0\}=\{\lim_{t\to\infty} Z_t=0\}$ of eventual extinction
and propose the following notation.
If
$\P(Z_t>0|Z_\infty=0)\sim const\cdot\E(Z_t|Z_\infty=0)$ as $t\to\infty$,
then we say that the BDRE $(Z_t,S_t)_{t\geq0}$ is
{\em strongly supercritical}.
If the probability of survival up to time $t\geq0$
conditioned on eventual extinction decays at a different exponential
rate as $t\to\infty$, then we refer to $(Z_t,S_t)_{t\geq0}$ as
{\em weakly supercritical}.
The intermediate regime is referred to as
{\em intermediate supercritical} regime.
Our first theorem provides the following characterisation.
The BDRE is
strongly supercritical if $\al>\sigma_e^2$,
intermediate supercritical if $\al=\sigma_e^2$
and
weakly supercritical if $\al\in(0,\sigma_e^2)$.
\begin{theorem}\label{thm:asymptotic_survival}
  Assume $\al,\sigma_e,\sigma_b\in(0,\infty)$.
  Let $(Z_t,S_t)_{t\geq0}$ be the unique solution of~\eqref{eq:BDRE}
  with $S_0=0$.
  Then
 \begin{align}
 \limt\sqrt{t}^3 e^{\frac{\al^2}{2\sigma_e^2}t}\,\P^z\Bigl(Z_t>0\,\Big|\,Z_\infty=0\Bigr)
   &=\frac{8}{\sigma_e^3}\int_0^\infty f(za)\phi_\beta(a)\,da>0
   &\text{if }\al\in(0,\sigma_e^2)\label{thweak}\\
 \limt\sqrt{t} e^{\frac{\sigma_e^2}{2}t}\,\P^z\Bigl(Z_t>0\,\Big|\,Z_\infty=0\Bigr)
   &=z\,\frac{\sqrt{2}\sigma_e}{\sqrt{\pi}\sigma_b^2}>0
   &\text{if }\al=\sigma_e^2\label{thinter}\\
 \limt e^{\rob{\al-\frac{\sigma_e^2}{2}}t}\,\P^z\Bigl(Z_t>0\,\Big|\,Z_\infty=0\Bigr)
   &=z\,2\frac{\al-\sigma_e^2}{\sigma_b^2}>0
   &\text{if }\al>\sigma_e^2\label{thstrong}
 \end{align}
 for every $z\in(0,\infty)$
 where $\beta:=\tfrac{2\al}{\sigma_e^2}$
 and where $\phi_\beta\colon(0,\infty)\to(0,\infty)$ is defined as
 \begin{equation} \label{defphibeta} \begin{split}
    \phi_\beta(a)=\int_0^\infty \int_0^\infty \frac{1}{\sqrt{2}\pi}\Gamma\Big(\frac{\beta+2}{2}\Big)
             e^{-a} a^{-\beta/2} u^{(\beta-1)/2}e^{-u}
      \frac{\sinh(\xi)\cosh(\xi) \xi}{(u+a(\cosh(\xi))^2)^{(\beta+2)/2}}d\xi\,du
 \end{split}    \end{equation}
 for every $a\in(0,\infty)$.
\end{theorem}
\noindent
The proof is deferred to Section~\ref{sec.proofs}.

Let us recall the behavior of Feller's branching diffusion, that is,
\eqref{eq:BDRE} with $\sigma_e=0$,
which is a branching diffusion in a constant environment.
The supercritical Feller diffusion conditioned on eventual extinction
agrees in distribution with a subcritical Feller diffusion.
This is a general property of branching processes in constant environment,
see Jagers and Lager{\aa}s (2008)\nocite{JagersLageras2008}
for the case of general branching processes
(Crump-Mode-Jagers processes).
Knowing this, Theorem~\ref{thm:asymptotic_survival} might not be surprising.
However, the case of random environment is different.
It turns out that the supercritical BDRE $(Z_t,S_t)_{t\geq0}$ conditioned on $\{Z_\infty=0\}$
is a two-dimensional diffusion which
does {\em not} satisfy the SDE~\eqref{eq:BDRE} and is not a branching diffusion
in homogeneous random environment 
if $\sigma_b>0$ and if $\sigma_e>0$.
More precisely, the associated Brownian motion $(S_t)_{t\geq0}$ conditioned on $\{Z_\infty=0\}$
has drift which depends on the current population size.

\begin{theorem}  \label{thm:sup.conditioned.E}
  Let $\sigma_e\in(0,\infty)$, let $\sigma_b,z\in[0,\infty)$
  and assume $\sigma_b+z>0$.
  If $\big(Z_t,S_t\big)_{t\geq0}$
  is the solution of~\eqref{eq:BDRE}
  with criticality parameter $\al\in(0,\infty)$,
  then
  \begin{equation}
    \Law{\left(Z_t,S_t\right)_{t\geq0}\big|Z_\infty=0}
    =
    \Law{\left(\Zv_t,\Sv_t\right)_{t\geq0}}
  \end{equation}
  where $\big(\Zv_t,\Sv_t\big)_{t\geq0}$ is a two-dimensional diffusion
  satisfying $\Zv_0=Z_0, \Sv_0=0$ and
  \begin{equation}  \begin{split}  \label{eq:Zvee}
    d\Zv_t&=\left(
         \frac 12 \sigma_e^2 -2\al\frac{\sigma_b^2}{\sigma_e^2 \Zv_t+\sigma_b^2}
            \right)\Zv_t dt
         +\Zv_t d\Sv_t+\sqrt{\sigma_b^2 \Zv_t}dW^{(b)}_t\\
    d\Sv_t&=\left(
                \alpha -2\al\frac{\sigma_e^2 \Zv_t}{\sigma_e^2 \Zv_t+\sigma_b^2}
            \right)dt +\sqrt{\sigma_e^2}dW^{(e)}_t
  \end{split}     \end{equation}
  for $t\geq0$.
\end{theorem}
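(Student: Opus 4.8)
I would realise the conditioned law as a Doob $h$-transform of the diffusion $(Z_t,S_t)_{t\ge0}$, with $h$ the extinction probability. Rewriting~\eqref{eq:BDRE} as $dZ_t=(\tfrac12\sigma_e^2+\al)Z_t\,dt+\sigma_e Z_t\,dW_t^{(e)}+\sqrt{\sigma_b^2 Z_t}\,dW_t^{(b)}$ and $dS_t=\al\,dt+\sigma_e\,dW_t^{(e)}$ shows that $(Z_t)_{t\ge0}$ is a functional of $Z_0,W^{(b)},W^{(e)}$ alone, so $h(z):=\P^z(Z_\infty=0)$ does not depend on $S_0$ and is a function of $z$ only. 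By the Markov property $\P(Z_\infty=0\mid\mathcal F_t)=h(Z_t)$, so $(h(Z_t))_{t\ge0}$ is a bounded martingale; hence $\MCL h\equiv0$ for the generator $\MCL$ of $(Z,S)$, which, as $h$ depends on $z$ only, reduces to the linear ODE $(\sigma_e^2+2\al)\,h'(z)+(\sigma_e^2 z+\sigma_b^2)\,h''(z)=0$ on $(0,\infty)$. Together with $h(0)=1$ (extinction is certain from $Z_0=0$) and $h(\infty)=0$ this yields $h(z)=\bigl(\sigma_b^2/(\sigma_e^2 z+\sigma_b^2)\bigr)^{2\al/\sigma_e^2}$. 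Both the boundary value $h(\infty)=0$ and an independent derivation of $h$ follow from the quenched extinction probability: conditionally on the environment $(S_u)_{u\ge0}$, the process $Y_t:=Z_t e^{-S_t}$ solves $dY_t=\sqrt{\sigma_b^2 e^{-S_t}Y_t}\,dW_t^{(b)}$, a time-inhomogeneous Feller branching diffusion with branching rate $\sigma_b^2e^{-S_t}$, so the standard Laplace-transform computation gives $\P(Z_\infty=0\mid(S_u)_u)=\exp\!\bigl(-2z/(\sigma_b^2\!\int_0^\infty e^{-S_u}\,du)\bigr)$; since $\int_0^\infty e^{-S_u}\,du<\infty$ a.s.\ (as $\al>0$), averaging gives $h(\infty)=0$, and the Dufresne-type identity $\int_0^\infty e^{-S_u}\,du\eqd 2/(\sigma_e^2 G_{2\al/\sigma_e^2})$, with $G$ as in~\eqref{eq:gamma.distribution}, reproduces the explicit $h$ above.

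Granting $h$, the plan is to invoke the standard conditioning formula $\E^z[F\mid Z_\infty=0]=\E^z[F\,h(Z_t)]/h(Z_0)$ for bounded $\mathcal F_t$-measurable $F$ (by the Markov property), which identifies $\Law{(Z_t,S_t)_{t\ge0}\mid Z_\infty=0}$ with the law of the diffusion whose generator is $\MCL^h f=\tfrac1h\MCL(hf)=\MCL f+\tfrac{h'}{h}\bigl(\MCA_{zz}\partial_z f+\MCA_{zs}\partial_s f\bigr)$, where $\MCA_{zz}=\sigma_e^2 z^2+\sigma_b^2 z$, $\MCA_{zs}=\MCA_{sz}=\sigma_e^2 z$, $\MCA_{ss}=\sigma_e^2$ are the entries of the diffusion matrix of $(Z,S)$. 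Using $h'/h=-2\al/(\sigma_e^2 z+\sigma_b^2)$ and $\sigma_e^2 z^2+\sigma_b^2 z=z(\sigma_e^2 z+\sigma_b^2)$, the $h$-transformed diffusion $(\Zv_t,\Sv_t)_{t\ge0}$ solves the same system as $(Z,S)$ but with the extra drift $-2\al\Zv_t$ in the first coordinate and $-2\al\sigma_e^2\Zv_t/(\sigma_e^2\Zv_t+\sigma_b^2)$ in the second, the Brownian coefficients being unchanged. Thus its $\Sv$-equation is exactly the second line of~\eqref{eq:Zvee}, and its $\Zv$-equation acquires drift $(\tfrac12\sigma_e^2-\al)\Zv_t$; substituting $\sigma_e\Zv_t\,dW_t^{(e)}=\Zv_t\,d\Sv_t-\Zv_t(\al-2\al\sigma_e^2\Zv_t/(\sigma_e^2\Zv_t+\sigma_b^2))\,dt$ and collecting the $dt$ terms turns this into the first line of~\eqref{eq:Zvee}, with $\Zv_0=Z_0$ and $\Sv_0=0$ inherited. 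Since the finite-dimensional distributions so produced coincide with those of any solution of~\eqref{eq:Zvee}, equality of the two laws on path space follows once~\eqref{eq:Zvee} is known to be well-posed.

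Two points need care. First, the $h$-transform requires $h>0$ on $[0,\infty)$, i.e.\ $\sigma_b>0$ (the case $z=0$, where $\Zv\equiv0$, $\Sv\equiv S$ and the claim is immediate, and the degenerate case $\sigma_b=0$, $z>0$, in which $Z_t=ze^{S_t}$, are treated separately); one should also check that $h\in C^2((0,\infty))\cap C([0,\infty))$ lies in the domain of $\MCL$ with $\MCL h\equiv0$, which is immediate from the explicit formula. Second, and this I expect to be the main obstacle, one must verify that the $h$-transformed generator corresponds to a \emph{well-posed} SDE: the coefficient $\sqrt{\sigma_b^2\Zv_t}$ is only $\tfrac12$-H\"older at $\Zv=0$ and $0$ is absorbing, so pathwise uniqueness should be obtained from Yamada--Watanabe for the $\Zv$-component (the coefficients $\Zv\mapsto\Zv$, $\Zv\mapsto\sigma_e^2\Zv/(\sigma_e^2\Zv+\sigma_b^2)$ and $\Zv\mapsto\sigma_b^2/(\sigma_e^2\Zv+\sigma_b^2)$ being Lipschitz on $[0,\infty)$ when $\sigma_b>0$) together with $0$ being a trap, exactly as for~\eqref{eq:BDRE}. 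The remaining steps -- solving the ODE and the elementary algebra -- are routine.
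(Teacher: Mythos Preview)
Your approach is essentially the same as the paper's: both perform a Doob $h$-transform with respect to the extinction probability, compute the $h$-transformed generator via $\check{\Gen}f=\tfrac{1}{h}\Gen(hf)=\Gen f+(h'/h)\bigl((\sigma_e^2z^2+\sigma_b^2z)\partial_z f+\sigma_e^2z\partial_s f\bigr)$ using $\Gen h\equiv0$ and $h'/h=-2\al/(\sigma_e^2z+\sigma_b^2)$, and then identify the resulting generator with that of~\eqref{eq:Zvee}. The paper writes the harmonic function as the scale function $U(z)=(\sigma_e^2z+\sigma_b^2)^{-2\al/\sigma_e^2}$ and verifies $\Gen U\equiv0$ directly, whereas you normalise to $h(z)=U(z)/U(0)$ and obtain it by solving the ODE with boundary data (with the Dufresne identity as a cross-check); the constant is of course irrelevant for the transform, and the remaining algebra is identical. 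Your explicit discussion of well-posedness of~\eqref{eq:Zvee} and of the degenerate cases $\sigma_b=0$ and $z=0$ is additional care that the paper leaves implicit.
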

\noindent
The proof is deferred to Section~\ref{sec.proofs}.

It is rather intuitive that the conditioned process is not a subcritical
BDRE if $\sigma_b>0$.
The supercritical BDRE has a positive probability of extinction.
Thus extinction does not require the associated Brownian motion to have negative drift.
As long as the BDRE stays small, extinction is possible despite the positive
drift of the associated Brownian motion.
Note that if $\Zv_t$ is small for some $t\geq0$, then the drift
term of $\Sv_t$ is close to $\al$.
Being doomed to extinction, the conditioned process
$(\Zv_t)_{t\geq0}$ is not allowed to grow to infinity.
If $\Zv_t$ is large for some $t\geq0$, then the drift term of
$\Sv_t$ is close to $-\al$ which leads a decrease of $(\Zv_t)_{t\geq0}$.
The situation is rather different in the case $\sigma_b=0$.
Then the extinction probability of the BDRE is zero.
So the drift of $(\Sv_t)_{t\geq0}$ needs to be negative in order to guarantee
$\Zv_t\to0$ as $t\to\infty$.
It turns out that if $\sigma_b=0$, then the drift of $(\Sv_t)_{t\geq0}$ is $-\al$
and $(\Zv_t,\Sv_t)_{t\geq0}$ is a subcritical BDRE with criticality parameter $-\al$.

We have seen that conditioning a supercritical BDRE on extinction
does -- in general -- not result in a subcritical BDRE.
However, if we condition $(Z_t,S_t)_{t\geq0}$ on
$\{S_\infty=-\infty\}$, then the conditioned process turns out to be
a subcritical BDRE with criticality parameter $-\al$.
\begin{theorem}  \label{thm:sup.conditioned.ES}
  Let $\sigma_e\in(0,\infty)$, let $\sigma_b,z\in[0,\infty)$
  and assume $\sigma_b+z>0$.
  Let $\big(Z_t^{(\al)},S_t^{(\al)}\big)_{t\geq0}$
  be the solution of~\eqref{eq:BDRE}
  with criticality parameter $\al\in\R$.
  If $\al>0$, then
  \begin{equation}
    \Law{\left(Z_t^{(\al)},S_t^{(\al)}\right)_{t\geq0}\big|S_\infty=-\infty}
    =
    \Law{\left(Z_t^{(-\al)},S_t^{(-\al)}\right)_{t\geq0}}
  \end{equation}
  where $Z_0^{(-\al)}=Z_0^{(\al)}$.
\end{theorem}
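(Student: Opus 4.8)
The plan is to exploit that the environment coordinate $(S_t)_{t\ge0}$ of~\eqref{eq:BDRE} evolves autonomously: it is a Brownian motion with drift $\al$ and variance $\sigma_e^2$ driven by $W^{(e)}$, which is independent of $W^{(b)}$. Hence $\{S_\infty=-\infty\}\in\sigma(W^{(e)})$, and conditioning on this event only reshapes the law of the environment path while leaving the conditional dynamics of $Z$ given that path untouched. Since $\al$ enters~\eqref{eq:BDRE} \emph{only} through the drift of the $S$-equation, it then suffices to show that conditioning $S$ on $\{S_\infty=-\infty\}$ turns its drift from $\al$ into $-\al$ and changes nothing else; the conditioned pair will then be, by construction and uniqueness, the BDRE with criticality parameter $-\al$ started from the same initial data.

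First I would set up the $h$-transform. The function $h(s):=\exp(-\tfrac{2\al}{\sigma_e^2}s)$ is annihilated by the generator $\tfrac{\sigma_e^2}{2}\partial_s^2+\al\partial_s$ of $S$, so $M_t:=\exp(-\tfrac{2\al}{\sigma_e^2}S_t)=\mathcal E(-\tfrac{2\al}{\sigma_e}W^{(e)})_t$ is a positive mean-one $\P^z$-martingale. Since $S$ has positive drift, $\P^z(S_\infty=-\infty)=0$, so $\P^z(\,\cdot\mid S_\infty=-\infty)$ must be read as $\lim_{M\to\infty}\P^z(\,\cdot\mid\tau_{-M}<\infty)$ with $\tau_{-M}:=\inf\{t\ge0\colon S_t=-M\}$. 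Writing $\MCF_t:=\sigma((Z_s,S_s)\colon s\le t)$, the Markov property at the fixed time $t$ and the classical hitting identity for Brownian motion with positive drift give $\P^z(\tau_{-M}<\infty\mid\MCF_t)=\exp(-\tfrac{2\al}{\sigma_e^2}(S_t+M))$ on $\{\tau_{-M}>t\}$; dividing by $\P^z(\tau_{-M}<\infty)=\exp(-\tfrac{2\al}{\sigma_e^2}M)$ shows that $\tfrac{d\P^z(\,\cdot\mid\tau_{-M}<\infty)}{d\P^z}\big|_{\MCF_t}$ equals $M_t$ on $\{\tau_{-M}>t\}$, while its restriction to $\{\tau_{-M}\le t\}$ has $L^1(\P^z)$-norm $\exp(\tfrac{2\al}{\sigma_e^2}M)\,\P^z(\tau_{-M}\le t)\to0$ (a Gaussian tail estimate for $\inf_{s\le t}S_s$ beats the exponential). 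Hence $\P^z(\,\cdot\mid\tau_{-M}<\infty)\to\Q^z$ on each $\MCF_t$, where $\Q^z$ is the Doob $h$-transform, characterised by $\tfrac{d\Q^z}{d\P^z}\big|_{\MCF_t}=M_t$.

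Next I would run Girsanov on the driving pair $(W^{(e)},W^{(b)})$ with density $M_t=\mathcal E(-\tfrac{2\al}{\sigma_e}W^{(e)})_t$: this shift touches only the $W^{(e)}$-coordinate, so under $\Q^z$ the process $\Wt^{(e)}_t:=W^{(e)}_t+\tfrac{2\al}{\sigma_e}t$ is a standard Brownian motion and $W^{(b)}$ stays a standard Brownian motion independent of $\Wt^{(e)}$. Substituting $dW^{(e)}_t=d\Wt^{(e)}_t-\tfrac{2\al}{\sigma_e}dt$ in~\eqref{eq:BDRE}, the $S$-equation becomes $dS_t=-\al\,dt+\sqrt{\sigma_e^2}\,d\Wt^{(e)}_t$, whereas the $Z$-equation keeps its form $dZ_t=\tfrac12\sigma_e^2 Z_t\,dt+Z_t\,dS_t+\sqrt{\sigma_b^2 Z_t}\,dW^{(b)}_t$ verbatim (now with the new $S$). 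This is exactly~\eqref{eq:BDRE} with $\al$ replaced by $-\al$, the same $\sigma_e,\sigma_b$, and the same $Z_0$, $S_0=0$; passing from each finite horizon to all of $[0,\infty)$ and invoking uniqueness of solutions of~\eqref{eq:BDRE} identifies $\Q^z$ with $\Law{(Z^{(-\al)},S^{(-\al)})}$, which is the assertion.

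The step I expect to be the main obstacle is the identification in the second paragraph that the $h$-transform $\Q^z$ really \emph{is} $\P^z(\,\cdot\mid S_\infty=-\infty)$: the conditioning event is $\P^z$-null and concerns the whole path, so its meaning has to be pinned down via the limit $\lim_{M\to\infty}\P^z(\,\cdot\mid\tau_{-M}<\infty)$ on the filtration $(\MCF_t)_{t\ge0}$. This is the standard Doob/Martin-boundary description of a transient one-dimensional diffusion and is carried out by the elementary hitting-probability computation above together with dominated convergence (or by appealing to the general theory); the remaining ingredients — harmonicity of $h$, the Girsanov computation, and the appeal to uniqueness — are routine.
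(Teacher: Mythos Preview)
Your proof is correct and rests on the same core idea as the paper: both identify $\P^z(\,\cdot\mid S_\infty=-\infty)$ as the Doob $h$-transform of the BDRE with respect to the scale function $V(s)=\exp(-\tfrac{2\al}{\sigma_e^2}s)$, justified via the limit $\lim_{M\to\infty}\P^z(\,\cdot\mid \tau_{-M}<\infty)$ and the explicit hitting formula $\P^s(\tau_{-M}<\infty)=V(s)/V(-M)$ (this is the content of the paper's Lemma~\ref{l:semigroup}). The only difference is in the final identification step: the paper differentiates the $h$-transformed semigroup at $t=0$, obtaining $\bar{\Gen}f=\Gen f + \sigma_e^2\,\tfrac{V'}{V}f_s + \sigma_e^2 z\,\tfrac{V'}{V}f_z=\Gen f-2\al f_s-2\al z f_z$, and recognises this as the generator of the BDRE with parameter $-\al$; you instead observe that $M_t=V(S_t)$ is the stochastic exponential $\mathcal E(-\tfrac{2\al}{\sigma_e}W^{(e)})_t$ and apply Girsanov directly to the driving pair, which transforms the SDE~\eqref{eq:BDRE} into itself with $\al$ replaced by $-\al$. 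These are two standard and essentially interchangeable ways to read off the dynamics of an $h$-transform; your Girsanov route is arguably a touch more self-contained since it bypasses the generator/well-posedness discussion, while the paper's generator computation fits more uniformly with its parallel proofs of Theorems~\ref{thm:sup.conditioned.E} and~\ref{thm:sup.conditioned.NE}, where the $h$-function depends on $z$ and no simple stochastic-exponential representation is available.
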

\noindent
The proof is deferred to Section~\ref{sec.proofs}.

Now we come to a somewhat surprising observation.
We will show that the law of $(Z_t)_{t\geq0}$ conditioned
on eventual extinction agrees in law with the law
of the population size of a subcritical BDRE.
More formally, inserting the second equation of~\eqref{eq:Zvee}
into the equation for $d\Zv_t$ we see that
\begin{equation}  \label{eq:Zvee.alone}
    d\Zv_t=\left(
         \frac 12 \sigma_e^2 -\al
            \right)\Zv_t dt
         +\sigma_e\Zv_t dW_t^{(e)} +\sqrt{\sigma_b^2 \Zv_t}dW^{(b)}_t
\end{equation}
for $t\geq0$.
This is the SDE for the population size of a subcritical BDRE with
criticality parameter $-\al$.
As the solution of \eqref{eq:Zvee.alone} is unique,
this proves the following corollary of
Theorem~\ref{thm:sup.conditioned.E}.

\begin{corollary}  \label{thm:sup.conditioned.E.alone}
  Let $\sigma_e\in(0,\infty)$, let $\sigma_b,z\in[0,\infty)$
  and assume $\sigma_b+z>0$.
  Let $\big(Z_t^{(\al)},S_t^{(\al)}\big)_{t\geq0}$
  be the solution of~\eqref{eq:BDRE}
  with criticality parameter $\al$ for every $\al\in\R$.
  If $\al>0$, then
  the law of the BDRE with criticality parameter $\al$ conditioned on extinction
  agrees with the law of the BDRE with criticality parameter $-\al$,
  that is,
  \begin{equation}
    \Law{(Z_t^{(\al)})_{t\geq0}\big|Z_\infty=0}
    =
    \Law{(Z_t^{(-\al)})_{t\geq0}}
  \end{equation}
  where $Z_0^{(-\al)}=Z_0^{(\al)}$.
\end{corollary}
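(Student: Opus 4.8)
The plan is to deduce the corollary from Theorem~\ref{thm:sup.conditioned.E} together with a uniqueness statement for the scalar SDE~\eqref{eq:Zvee.alone}, essentially as indicated in the paragraph preceding the corollary. By Theorem~\ref{thm:sup.conditioned.E} the quenched law on the left-hand side equals $\Law{(\Zv_t)_{t\geq0}}$, the law of the first coordinate of the two-dimensional diffusion $(\Zv_t,\Sv_t)_{t\geq0}$ solving~\eqref{eq:Zvee} with $\Zv_0=Z_0$; so it suffices to identify the law of $(\Zv_t)_{t\geq0}$.

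First I would substitute the drift of $d\Sv_t$ from the second line of~\eqref{eq:Zvee} into the equation for $d\Zv_t$. Writing $\Zv_t\,d\Sv_t=\big(\al-2\al\tfrac{\sigma_e^2\Zv_t}{\sigma_e^2\Zv_t+\sigma_b^2}\big)\Zv_t\,dt+\sigma_e\Zv_t\,dW^{(e)}_t$ and adding the $dt$-contributions, the rational terms collapse: the coefficient of $\Zv_t\,dt$ becomes $\tfrac12\sigma_e^2+\al-2\al\tfrac{\sigma_b^2+\sigma_e^2\Zv_t}{\sigma_e^2\Zv_t+\sigma_b^2}=\tfrac12\sigma_e^2-\al$, which is precisely~\eqref{eq:Zvee.alone}. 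Carrying out the analogous elimination of $S$ in~\eqref{eq:BDRE} with criticality parameter $-\al$ (now $Z_t\,dS_t=-\al Z_t\,dt+\sigma_e Z_t\,dW^{(e)}_t$) shows that $(Z_t^{(-\al)})_{t\geq0}$ solves the very same equation~\eqref{eq:Zvee.alone} with the same initial value $Z_0$. Hence both $(\Zv_t)_{t\geq0}$ and $(Z_t^{(-\al)})_{t\geq0}$ are adapted — hence weak — solutions of~\eqref{eq:Zvee.alone}. It then remains to invoke uniqueness: amalgamating the two independent driving Brownian motions, $(\Zv_t)_{t\geq0}$ satisfies $d\Zv_t=(\tfrac12\sigma_e^2-\al)\Zv_t\,dt+\sqrt{\sigma_e^2\Zv_t^2+\sigma_b^2\Zv_t}\,dB_t$ on $[0,\infty)$ for a suitable Brownian motion $B$, whose drift is Lipschitz and whose diffusion coefficient $x\mapsto\sqrt{\sigma_e^2x^2+\sigma_b^2x}$ is Lipschitz away from $0$ and behaves like $\sigma_b\sqrt x$ near $0$, so the Yamada--Watanabe criterion yields pathwise uniqueness and therefore uniqueness in law — exactly the argument already underlying the uniqueness of solutions of~\eqref{eq:BDRE}. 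Thus $\Law{(\Zv_t)_{t\geq0}}=\Law{(Z_t^{(-\al)})_{t\geq0}}$, and combining with Theorem~\ref{thm:sup.conditioned.E} gives the claim.

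There is no real obstacle here: all of the substance is contained in Theorem~\ref{thm:sup.conditioned.E}. The only step requiring a moment's care is verifying that, after the substitution, the equation for $(\Zv_t)_{t\geq0}$ is genuinely closed, i.e.\ that no residual dependence on $(\Sv_t)_{t\geq0}$ survives — but this is immediate since the rational drift contributions cancel identically. I therefore expect this corollary to be proved in a few lines once Theorem~\ref{thm:sup.conditioned.E} is available.
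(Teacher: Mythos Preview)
Your proposal is correct and follows exactly the approach the paper takes: the corollary is proved in the paragraph immediately preceding it by substituting the second line of~\eqref{eq:Zvee} into the first to obtain the closed scalar SDE~\eqref{eq:Zvee.alone}, observing this coincides with the SDE for $(Z_t^{(-\al)})_{t\geq0}$, and invoking uniqueness. Your added justification of uniqueness via the Yamada--Watanabe criterion is a reasonable expansion of what the paper leaves implicit.
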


So far we considered the event of extinction.
Next we condition the BDRE on the event 
$\{Z_\infty>0\}
:=
\{\limt\, Z_t=\infty\}$
of non-extinction.
Define $U\colon[0,\infty)\to[0,\infty)$ by
  \begin{equation}
    U(z):=\left(\sigma_e^2 z+\sigma_b^2\right)^{-\frac{2\al}{\sigma_e^2}}
  \end{equation}
for $z\in[0,\infty)$.
We agree on the convention that
\begin{equation}
  \frac{c}{0}:=\begin{cases}
                 \infty&\text{if }c\in(0,\infty]\\
                 0&\text{if }c=0
               \end{cases}
  \quad
  \frac{c}{\infty}:=0 \text{ for }c\in[0,\infty)
  \quad\text{and that }0\cdot\infty=0.
\end{equation}

\begin{theorem}  \label{thm:sup.conditioned.NE}
  Let $\sigma_e\in(0,\infty)$, let $\sigma_b,z\in[0,\infty)$
  and assume $\sigma_b+z>0$.
  Let $\big(Z_t,S_t\big)_{t\geq0}$
  be the solution of~\eqref{eq:BDRE}
  with criticality parameter $\al>0$.
  Then
  \begin{equation}  \label{eq:sup.conditioned.NE}
    \Law{\left(Z_t,S_t\right)_{t\geq0}\big|Z_\infty>0}
    =
    \Law{\left(\Zh_t,\Sh_t\right)_{t\geq0}}
  \end{equation}
  where $\big(\Zh_t,\Sh_t\big)_{t\geq0}$ is a two-dimensional diffusion
  satisfying $\Zh_0=Z_0, \Sh_0=0$ and
  \begin{equation}  \begin{split}  \label{eq:Zhat}
    d\Zh_t&=\left(
         \frac 12 \sigma_e^2 +2\al
            \frac{\sigma_b^2}{\sigma_e^2 \Zh_t+\sigma_b^2}
            \frac{U\big(\Zh_t\big)}{U(0)-U\big(\Zh_t\big)}
           \right)\Zh_t\, dt
         +\Zh_t d\Sh_t+\sqrt{\sigma_b^2 \Zh_t}dW^{(b)}_t\\
    d\Sh_t&=\left(
                \alpha +2\al\frac{\sigma_e^2 \Zh_t}{\sigma_e^2 \Zh_t+\sigma_b^2}
                \frac{U\big(\Zh_t\big)}{U(0)-U\big(\Zh_t\big)}
            \right)\,dt
            +\sqrt{\sigma_e^2}dW^{(e)}_t
  \end{split}     \end{equation}
  for $t\geq0$.
  The law of $(Z_t)_{t\geq0}$ conditioned on non-extinction
  satisfies that
  \begin{equation}
    \Law{(Z_t)_{t\geq0}\big|Z_\infty>0}
    =
    \Law{(\Zh_t)_{t\geq0}}
  \end{equation}
  where $\big(\Zh_t\big)_{t\geq0}$ is the solution of the one-dimensional SDE
  satisfying $\Zh_0=Z_0$ and
  \begin{equation}  \begin{split}  \label{eq:Zhat.quenched}
    d\Zh_t&=\left(
         \frac 12 \sigma_e^2 +\al+2\al
            \frac{U\big(\Zh_t\big)}{U(0)-U\big(\Zh_t\big)}
         \right)\Zh_t\, dt
         +\sigma_e\Zh_t dW_t^{(e)}+\sqrt{\sigma_b^2 \Zh_t}dW^{(b)}_t
  \end{split}     \end{equation}
  for $t\geq0$.
\end{theorem}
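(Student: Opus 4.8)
The plan is to realise the conditioned process as a Doob $h$-transform of the BDRE, with $h(z):=\P^z(Z_\infty>0)$ the non-extinction probability, and then to read off \eqref{eq:Zhat} and \eqref{eq:Zhat.quenched} from Girsanov's theorem. A first observation makes this feasible: the $Z$-marginal of \eqref{eq:BDRE} is an autonomous one-dimensional diffusion, since $Z_t\,dS_t=\al Z_t\,dt+\sigma_e Z_t\,dW^{(e)}_t$ and hence $dZ_t=(\tfrac12\sigma_e^2+\al)Z_t\,dt+\sigma_e Z_t\,dW^{(e)}_t+\sqrt{\sigma_b^2 Z_t}\,dW^{(b)}_t$, which involves neither $S$ nor $S_0$. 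Consequently $\P^z(Z_\infty>0)$ is a function $h=h(z)$ of $z$ alone, and by the Markov property of this autonomous diffusion $h(Z_T)=\P(Z_\infty>0\mid\mathcal F_T)$ for every $T\ge0$. The generator of the $Z$-marginal is $L_Zf(z)=(\tfrac12\sigma_e^2+\al)z f'(z)+\tfrac12 z(\sigma_e^2 z+\sigma_b^2)f''(z)$, and a direct computation gives $L_Z U\equiv0$ together with $U'(z)=-2\al(\sigma_e^2 z+\sigma_b^2)^{-1}U(z)$.

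Next I would identify $h$ explicitly. Assume first $\sigma_b>0$, so that $0<U(z)\le U(0)<\infty$. Then $U(Z_t)$ is a bounded local martingale (by $L_Z U=0$), hence a true martingale, hence converges a.s.; since $U$ is a decreasing homeomorphism of $[0,\infty)$ onto $(0,U(0)]$, this forces $Z_t$ to converge in $[0,\infty]$, and convergence to an interior point $c\in(0,\infty)$ is impossible because the martingale part of $Z$ would then accumulate infinite quadratic variation $\int_0^t(\sigma_e^2 Z_s^2+\sigma_b^2 Z_s)\,ds$. Thus $Z_\infty\in\{0,\infty\}$ a.s.\ (alternatively this dichotomy can be quoted from \cite{BoeinghoffHutzenthaler2011pre}), and by continuity of $U$ with $U(0+)=U(0)$, $U(\infty-)=0$ the a.s.\ limit of $U(Z_t)$ equals $U(0)\mathbbm{1}_{\{Z_\infty=0\}}$; taking $\E^z$ yields $\P^z(Z_\infty=0)=U(z)/U(0)$, i.e.\ $h(z)=\bigl(U(0)-U(z)\bigr)/U(0)>0$ for $z>0$. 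When $\sigma_b=0$ the $Z$-marginal is a geometric Brownian motion with positive drift, so $h\equiv1$ on $(0,\infty)$, the conditioning is vacuous, and the conventions for $c/0$ and $c/\infty$ collapse the coefficients in \eqref{eq:Zhat} and \eqref{eq:Zhat.quenched} to those of \eqref{eq:BDRE}; the degenerate case $z=0$, $\sigma_b>0$ is read as the $z\downarrow0$ limit of the statement.

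Now for the $h$-transform. For $z>0$ the identity $\frac{d\P^z(\,\cdot\mid Z_\infty>0)}{d\P^z}\big|_{\mathcal F_T}=\frac{h(Z_T)}{h(z)}$ is immediate from the definition of conditional probability together with $h(Z_T)=\P(Z_\infty>0\mid\mathcal F_T)$, and its density process $h(Z_t)/h(z)$ is a bounded positive martingale satisfying $dh(Z_t)=h'(Z_t)\bigl(\sigma_e Z_t\,dW^{(e)}_t+\sqrt{\sigma_b^2 Z_t}\,dW^{(b)}_t\bigr)$, the drift vanishing by $L_Zh=0$. By Girsanov's theorem, under $\P^z(\,\cdot\mid Z_\infty>0)$ the processes $\Wh^{(e)}_t:=W^{(e)}_t-\int_0^t\frac{h'(Z_s)}{h(Z_s)}\sigma_e Z_s\,ds$ and $\Wh^{(b)}_t:=W^{(b)}_t-\int_0^t\frac{h'(Z_s)}{h(Z_s)}\sqrt{\sigma_b^2 Z_s}\,ds$ are independent standard Brownian motions. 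Substituting these into \eqref{eq:BDRE} and using $\frac{h'(z)}{h(z)}=\frac{2\al}{\sigma_e^2 z+\sigma_b^2}\cdot\frac{U(z)}{U(0)-U(z)}$ (which follows from $h'=-U'/U(0)$ and the formula for $U'$) produces precisely the system \eqref{eq:Zhat} for $(\Zh,\Sh):=(Z,S)$ read under the conditioned law; inserting the $\Sh$-equation into the $\Zh$-equation and using $\frac{\sigma_b^2+\sigma_e^2 z}{\sigma_e^2 z+\sigma_b^2}=1$ then yields \eqref{eq:Zhat.quenched}.

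Finally, to turn ``the conditioned process solves \eqref{eq:Zhat}'' into the asserted equality of laws one needs weak (in fact pathwise) uniqueness for \eqref{eq:Zhat}, equivalently for \eqref{eq:Zhat.quenched}: on $(0,\infty)$ the coefficients are locally Lipschitz apart from the $\sqrt{\sigma_b^2 z}$ term, handled by the Yamada--Watanabe criterion; as $z\downarrow0$ the extra drift $2\al z\,U(z)/(U(0)-U(z))$ tends to $\sigma_b^2>0$ (since $U(0)-U(z)\sim -U'(0)z$), so $0$ is not reached from the interior; and as $z\to\infty$ the extra drift vanishes, so non-explosion is inherited from \eqref{eq:BDRE}. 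Uniqueness of \eqref{eq:Zhat} together with the Girsanov computation then identifies the conditioned law as claimed. I expect this boundary/well-posedness analysis, together with pinning down the a.s.\ limit of $U(Z_t)$ via the extinction--explosion dichotomy, to be the only genuine work; the rest is routine $h$-transform bookkeeping.
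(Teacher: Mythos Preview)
Your proof is correct and follows essentially the same $h$-transform strategy as the paper, with $h(z)=(U(0)-U(z))/U(0)$ the survival probability and $L_Z h=0$ the key harmonicity input. The only difference is one of implementation: the paper computes the generator of the conditioned process directly from the transformed semigroup $\E^{(z,s)}[(U(0)-U(Z_t))f(Z_t,S_t)]/(U(0)-U(z))$ (its Lemma~\ref{l:semigroup}) and matches it against the generator of~\eqref{eq:Zhat}, whereas you write the Radon--Nikodym density $h(Z_t)/h(z)$ explicitly and apply Girsanov to the driving Brownian motions to read off the SDE. These are equivalent once $L_Z U=0$ is in hand; your route has the minor advantage of producing the SDE form directly rather than via a generator comparison, and you are more explicit than the paper about well-posedness of~\eqref{eq:Zhat} (the boundary analysis at $0$ via $2\al z\,U(z)/(U(0)-U(z))\to\sigma_b^2$, Yamada--Watanabe for the square-root term, and non-explosion), which the paper leaves implicit in the phrase ``which implies~\eqref{eq:sup.conditioned.NE}''.
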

\noindent
The proof is deferred to Section~\ref{sec.proofs}.

On the event of non-extinction,
the population size $Z_t$ of a supercritical  BDRE grows like its
expectation $\E(Z_t|S_t)$  as $t\to\infty$.
\begin{theorem}  \label{thm:exponential.growth}
  Let $\sigma_e\in(0,\infty)$, let $\sigma_b,z\in[0,\infty)$
  and assume $\sigma_b+z>0$.
  Let $(Z_t,S_t)_{t\geq0}$ be the solution of~\eqref{eq:BDRE}
  with criticality parameter $\al\in\R$.
  Then $\left(Z_t/e^{S_t}\right)_{t\geq0}$ is a nonnegative martingale.
  Consequently for every initial value $Z_0=z\in[0,\infty)$
  there exists a random variable $Y\colon\Omega\to[0,\infty)$ such that
  \begin{equation} 
    \frac{Z_t}{e^{S_t}}\lra Y\qquad\text{as }t\to\infty
    \quad\text{almost surely}.
  \end{equation}
  The limiting variable is zero if and only if the BDRE goes to extinction,
  that is, $\P^z(Y=0)=\P^z(Z_\infty=0)$. In the supercritical case $\al>0$,
  the distribution of the limiting variable $Y$ satisfies that
  \begin{equation}  \label{eq:Laplace.Y}
    \E^z\Big[\exp\left(
       -\ld Y
     \right)\Big]
    =
    \E\bigg[\exp\bigg(
       -\frac{z}{\frac{\sigma_b^2}{\sigma_e^2}G_{\frac{2\al}{\sigma_e^2}}+\frac{1}{\ld}}
     \bigg)\bigg]
  \end{equation}
 for all $z,\ld\in[0,\infty)$
 where $G_\nu$ is gamma-distributed with
 shape parameter $\nu\in(0,\infty)$ and scale parameter $1$, that is,
 \begin{equation}  \label{eq:gamma.distribution}
   \P(G_\nu\in dx)= \frac{1}{\Gamma(\nu)}x^{\nu-1}e^{-x}\,dx
 \end{equation}
 for $x\in(0,\infty)$.
\end{theorem}
The proof is deferred to Section~\ref{sec.proofs}.
In particular, Theorem~\ref{thm:exponential.growth}
implies that $Z_\infty:=\limt Z_t$ exists almost surely
and that $Z_\infty\in\{0,\infty\}$ almost surely.
\section{Proofs}
\label{sec.proofs}
If $\sigma_b=0$ and $Z_0>0$, then the process
$(Z_t)_{t\geq0}$ does not hit $0$ in finite time
almost surely.
So the interval $(0,\infty)$ is a state space for
$(Z_t)_{t\geq0}$ if $\sigma_b=0$.
The following analysis works with the state space
$[0,\infty)$ for the case $\sigma_b>0$ and
with the state space
$(0,\infty)$ for the case $\sigma_b=0$.
To avoid case-by-case analysis
we assume $\sigma_b>0$ for the rest of this section.
One can check that our proofs also work
in the case $\sigma_b=0$ if the state space $[0,\infty)$
is replaced by $(0,\infty)$.

Inserting the associated Brownian motion $(S_t)_{t\geq0}$
into the diffusion equation of $(Z_t)_{t\geq0}$, we see that
$(Z_t)_{t\geq0}$ solves the SDE
\begin{equation}  \label{eq:Z}
 dZ_t=\Bigl(\alpha+\frac 12 \sigma_e^2\Bigr)Z_t\, dt
     +\sqrt{\sigma_e^2 Z_t^2}\,dW^{(e)}_t+\sqrt{\sigma_b^2 Z_t}\,dW^{(b)}_t
\end{equation}
for $t\in[0,\infty)$. 
One-dimensional diffusions are well-understood.
In particular the scale functions are known.
For the reason of completeness we derive a scale function
for~\eqref{eq:Z} in the following lemma.
The generator of $(Z_t,S_t)_{t\geq0}$ is the closure of the pregenerator
$\Gen\colon\C_0^2([0,\infty)\times\R)\to\C([0,\infty)\times\R)$
given by
\begin{equation}  \begin{split}
  \Gen f(z,s):=&\left(\al+\frac{\sigma_e^2}{2}\right)z\deldz f(z,s)
      +\al\delds f(z,s) 
      +\frac{1}{2}\left(\sigma_e^2 z^2+\sigma_b^2 z\right)\delzdzz f(z,s)
      \\
      &
      +\frac{1}{2}\sigma_e^2\delzdsz f(z,s)
      +\sigma_e^2z\delzdzds f(z,s)
\end{split}     \end{equation}
for all $z\in[0,\infty)$, $s\in\R$ and every $f\in\C_0^2\rob{[0,\infty)\times\R}$.

\begin{lemma}
  Assume $\sigma_e,\sigma_b,\al\in(0,\infty)$.
  Define the functions $U\colon[0,\infty)\to(0,\infty)$
  and $V\colon\R\to(0,\infty)$ through
  \begin{equation}
    U(z):=\left(\sigma_e^2 z+\sigma_b^2\right)^{-\frac{2\al}{\sigma_e^2}}
    \quad\text{ and }\quad
    V(s):=\exp\left(-\frac{2\al}{\sigma_e^2}s\right)
  \end{equation}
  for $z\in[0,\infty)$ and $s\in\R$.
  Then $U$ is a scale function for $(Z_t)_{t\geq0}$ and
  $V$ is a scale function for $(S_t)_{t\geq0}$,
  that is, $\Gen U\equiv 0$ and $\Gen V\equiv 0$
  so
  $\big(U(Z_t)\big)_{t\geq0}$ 
  and
  $\big(V(S_t)\big)_{t\geq0}$ 
  are martingales.
\end{lemma}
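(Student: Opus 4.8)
The plan is to verify the two pointwise identities $\Gen U\equiv 0$ and $\Gen V\equiv 0$ by direct differentiation, and then to read off the martingale statements from It\^o's formula. Since $U$ is a function of the variable $z$ alone and $V$ a function of $s$ alone, all the $s$-derivatives (resp.\ $z$-derivatives) and the mixed derivative $\delzdzds$ appearing in the definition of $\Gen$ vanish, so that it remains to check that
\begin{align*}
  \Gen U(z)&=\Bigl(\al+\tfrac{\sigma_e^2}{2}\Bigr)z\,U'(z)+\tfrac12\bigl(\sigma_e^2 z^2+\sigma_b^2 z\bigr)U''(z)=0,\\
  \Gen V(s)&=\al\,V'(s)+\tfrac12\sigma_e^2\,V''(s)=0.
\end{align*}
In other words $U$ and $V$ must be scale functions in the classical one-dimensional sense for $(Z_t)_{t\geq0}$ and $(S_t)_{t\geq0}$; indeed, up to an affine transformation $U$ coincides with a primitive of the classical scale density $z\mapsto\exp\bigl(-\int^z 2b(y)/a(y)\,dy\bigr)$ of~\eqref{eq:Z}, where $b(z)=(\al+\tfrac{\sigma_e^2}{2})z$ and $a(z)=\sigma_e^2 z^2+\sigma_b^2 z$, which is a useful sanity check on the exponent $-\tfrac{2\al}{\sigma_e^2}$.

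To make the cancellation transparent I would write $\kappa:=\tfrac{2\al}{\sigma_e^2}>0$, so that $U(z)=(\sigma_e^2 z+\sigma_b^2)^{-\kappa}$ with $U'(z)=-\kappa\sigma_e^2(\sigma_e^2 z+\sigma_b^2)^{-\kappa-1}$ and $U''(z)=\kappa(\kappa+1)\sigma_e^4(\sigma_e^2 z+\sigma_b^2)^{-\kappa-2}$. Using $\al+\tfrac{\sigma_e^2}{2}=\tfrac{\sigma_e^2}{2}(\kappa+1)$ together with the factorisation $\sigma_e^2 z^2+\sigma_b^2 z=z(\sigma_e^2 z+\sigma_b^2)$, the two summands of $\Gen U(z)$ become $\mp\tfrac{\sigma_e^4}{2}\kappa(\kappa+1)\,z\,(\sigma_e^2 z+\sigma_b^2)^{-\kappa-1}$ and so cancel. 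For $V$ one has $V'(s)=-\kappa V(s)$ and $V''(s)=\kappa^2 V(s)$, whence $\Gen V(s)=(-\al\kappa+\tfrac12\sigma_e^2\kappa^2)V(s)$, which vanishes because $\tfrac12\sigma_e^2\kappa=\al$. These are the routine computations that the proof will spell out.

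For the martingale assertions I would apply It\^o's formula to the solutions of the two SDEs. Writing $(Z_t)_{t\geq0}$ in the form~\eqref{eq:Z}, It\^o's formula together with $\Gen U\equiv0$ yields
\begin{equation*}
  dU(Z_t)=U'(Z_t)\bigl(\sigma_e Z_t\,dW_t^{(e)}+\sqrt{\sigma_b^2 Z_t}\,dW_t^{(b)}\bigr),
\end{equation*}
so $(U(Z_t))_{t\geq0}$ is a continuous local martingale; since $\sigma_b>0$ and $\kappa>0$ it is bounded, $0<U(z)\leq U(0)=\sigma_b^{-2\kappa}<\infty$ for all $z\in[0,\infty)$, hence a genuine martingale. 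For $V$, since $S_t=\al t+\sigma_e W_t^{(e)}$ we have $V(S_t)=\exp\bigl(-\kappa\sigma_e W_t^{(e)}-\tfrac12\kappa^2\sigma_e^2 t\bigr)$ (using $\kappa\al=\tfrac12\kappa^2\sigma_e^2$), which is the Dol\'eans--Dade exponential of the constant integrand $-\kappa\sigma_e$ and therefore a martingale; equivalently, It\^o gives that $(V(S_t))_{t\geq0}$ is a nonnegative local martingale, hence a supermartingale, and the explicit Gaussian identity $\E[V(S_t)]=e^{-\kappa\al t+\frac12\kappa^2\sigma_e^2 t}=1=V(0)$ for all $t\geq0$ promotes it to a martingale.

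The whole argument is elementary; the only step needing a little care is the passage from local martingale to true martingale. For $U$ this is immediate from boundedness — here the standing assumption $\sigma_b>0$ of this section enters — whereas for $V$ it rests on recognising the exponential martingale (or, equivalently, on the one-line expectation computation above), since $V(S_t)$ is itself unbounded.
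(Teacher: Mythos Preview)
Your proposal is correct and follows essentially the same route as the paper: direct differentiation to verify $\Gen U\equiv0$ and $\Gen V\equiv0$, followed by It\^o's formula to obtain the martingale property. Your argument is in fact slightly more careful than the paper's, which passes from the It\^o representation straight to ``martingale'' without commenting on the local-to-true step; your observations that $U$ is bounded (using $\sigma_b>0$) and that $V(S_t)$ is an exponential martingale fill that gap neatly.
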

\begin{proof}
  Note that $U$ is twice continuously differentiable.
  Thus we get that
  \begin{equation}  \begin{split}
    \Gen \left(\sigma_e^2 z+\sigma_b^2\right)^{-\frac{2\al}{\sigma_e^2}}
    =
     &\left(\al+\frac{\sigma_e^2}{2}\right)z\cdot  
    \frac{-2\al}{\sigma_e^2}
     \left(\sigma_e^2 z+\sigma_b^2\right)^{-\frac{2\al}{\sigma_e^2}-1}
     \sigma_e^2
     \\
     &+\frac{1}{2}
     \left(\sigma_e^2 z^2+\sigma_b^2z\right)
    \frac{2\al}{\sigma_e^2}
    \left(\frac{2\al}{\sigma_e^2}+1\right)
     \left(\sigma_e^2 z+\sigma_b^2\right)^{-\frac{2\al}{\sigma_e^2}-2}
     \sigma_e^4
    \\
    =&
     \left(\sigma_e^2 z+\sigma_b^2\right)^{-\frac{2\al}{\sigma_e^2}-1}
     z
     \left(-2\al^2-\al\sigma_e^2+\frac{1}{2}4\al^2+\frac{1}{2}2\al\sigma_e^2\right)
     \\
    =&\, 0
  \end{split}     \end{equation}
  for all $z\in[0,\infty)$.
  Moreover $V$ is twice continuously differentiable and we obtain that
  \begin{equation}  \begin{split}
    \Gen \exp\left(-\frac{2\al}{\sigma_e^2}s\right)
    &=\al\exp\left(-\frac{2\al}{\sigma_e^2}s\right)\frac{-2\al}{\sigma_e^2}
      +\frac{1}{2}\sigma_e^2
      \exp\left(-\frac{2\al}{\sigma_e^2}s\right)
      \left(\frac{-2\al}{\sigma_e^2}\right)^2
      =0
  \end{split}     \end{equation}
  for all $s\in\R$.
  This shows $\Gen U\equiv0\equiv\Gen V$.
  Now  It\^o's formula implies that
  \begin{equation}  \begin{split}
    d U(Z_t) &=\Gen U(Z_t)\,dt+U^{'}(Z_t)
      \cdot
     \left(\sqrt{\sigma_e^2 Z_t^2}\,dW^{(e)}_t+\sqrt{\sigma_b^2 Z_t}\,dW^{(b)}_t
     \right)
    \\
    d V(S_t) &=\Gen V(S_t)\,dt+V^{'}(S_t)\sqrt{\sigma_e^2}d W_t^{(e)}
  \end{split}     \end{equation}
  for all $t\geq0$.
  This proves that
  $\big(U(Z_t)\big)_{t\geq0}$
  and
  $\big(V(S_t)\big)_{t\geq0}$
  are martingales.
\end{proof}
\begin{lemma} \label{l:semigroup}
  Assume $\sigma_e,\sigma_b,\al\in(0,\infty)$.
  Then the semigroup
  of the BDRE $(Z_t,S_t)_{t\geq0}$
  conditioned on extinction satisfies that
  \begin{align}
    \E^{(z,s)}\left[ f(Z_t,S_t)\big| Z_\infty=0 \right]
    &=\frac{\E^{(z,s)}\left[ U(Z_t) f(Z_t,S_t) \right]
          }{U(z)},
    \label{eq:semigroup.conditioned.E}
  \intertext{
  the semigroup
  of the BDRE $(Z_t,S_t)_{t\geq0}$
  conditioned on $\{S_\infty=-\infty\}$ satisfies that
  }
    \E^{(z,s)}\left[ f(Z_t,S_t)\big| S_\infty=-\infty \right]
    &=\frac{\E^{(z,s)}\left[ V(S_t) f(Z_t,S_t) \right]
          }{V(s)}
    \label{eq:semigroup.conditioned.ES}
  \intertext{
  and the semigroup
  of the BDRE $(Z_t,S_t)_{t\geq0}$
  conditioned on $\{Z_\infty>0\}$ satisfies that
  }
    \E^{(z,s)}\left[ f(Z_t,S_t)\big| Z_\infty>0 \right]
    &=\frac{\E^{(z,s)}\left[ \left(U(0)-U(Z_t)\right) f(Z_t,S_t) \right]
          }{U(0)-U(z)}
    \label{eq:semigroup.conditioned.NE}
  \end{align}
  for every $z\in[0,\infty)$, $s\in\R$, $t\geq0$ and every 
  bounded measurable function $f\colon[0,\infty)\times\R\to\R$.
\end{lemma}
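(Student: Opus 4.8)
The plan is to realise the three conditioned semigroups as Doob $h$-transforms of $(Z_t,S_t)_{t\geq0}$ with harmonic functions taken from the previous lemma, and then to rewrite each $h$-transform in the stated form using the Markov property. Throughout I use that $Z_\infty:=\lim_{t\to\infty}Z_t$ exists in $[0,\infty]$ with $Z_\infty\in\{0,\infty\}$ almost surely, as recorded after Theorem~\ref{thm:exponential.growth}. The first step is to identify the extinction probability: since $U$ takes values in $(0,U(0)]$, the martingale $(U(Z_t))_{t\geq0}$ is bounded, hence converges almost surely and in $L^1$, and by continuity of $U$ with $U(\infty)=0$ its limit is $U(Z_\infty)=U(0)\mathbbm{1}_{\{Z_\infty=0\}}$; taking expectations gives $\P^{(z,s)}(Z_\infty=0)=U(z)/U(0)$, whence $\P^{(z,s)}(Z_\infty>0)=(U(0)-U(z))/U(0)$. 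Applying this from time $t$ and using the Markov property of $(Z_t,S_t)_{t\geq0}$ yields $\P^{(z,s)}(Z_\infty=0\mid\mathcal F_t)=U(Z_t)/U(0)$ and $\P^{(z,s)}(Z_\infty>0\mid\mathcal F_t)=(U(0)-U(Z_t))/U(0)$.

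Formulas~\eqref{eq:semigroup.conditioned.E} and~\eqref{eq:semigroup.conditioned.NE} then follow by straightforward conditioning. For bounded measurable $f$ the random variable $f(Z_t,S_t)$ is $\mathcal F_t$-measurable, so
\[
  \E^{(z,s)}\bigl[f(Z_t,S_t)\mathbbm{1}_{\{Z_\infty=0\}}\bigr]
  =\E^{(z,s)}\Bigl[f(Z_t,S_t)\,\P^{(z,s)}\bigl(Z_\infty=0\mid\mathcal F_t\bigr)\Bigr]
  =\frac1{U(0)}\,\E^{(z,s)}\bigl[U(Z_t)f(Z_t,S_t)\bigr];
\]
dividing by $\P^{(z,s)}(Z_\infty=0)=U(z)/U(0)$, which is positive because $z<\infty$, gives~\eqref{eq:semigroup.conditioned.E}. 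Replacing $U(Z_t)$ by $U(0)-U(Z_t)$ everywhere gives~\eqref{eq:semigroup.conditioned.NE} when $z>0$, while for $z=0$ both sides vanish under the stated conventions for division by zero.

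For~\eqref{eq:semigroup.conditioned.ES} the event $\{S_\infty=-\infty\}$ has probability zero, $S$ being a Brownian motion with positive drift; following the usual recipe I read the left side as $\lim_{R\to\infty}\E^{(z,s)}[\,\cdot\mid\tau_{-R}<\infty]$ with $\tau_{-R}:=\inf\{u\geq0\colon S_u=-R\}$, which is precisely the Doob transform by $V$. Optional stopping applied to $V(S_u)$ up to the exit time of $S$ from an interval $(-R,M)$, followed by $M\to\infty$, gives $\P^{(z,s)}(\tau_{-R}<\infty)=V(s)/V(-R)$, and the same identity started from time $t$ gives $\E^{(z,s)}[\mathbbm{1}_{\{\tau_{-R}<\infty\}}\mid\mathcal F_t]=V(S_t)/V(-R)$ on $\{\tau_{-R}>t\}$. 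Splitting $\{\tau_{-R}<\infty\}$ according to whether $\tau_{-R}\leq t$ and using $\mathcal F_t$-measurability of $f(Z_t,S_t)$ leads to
\[
  \E^{(z,s)}\bigl[f(Z_t,S_t)\mid\tau_{-R}<\infty\bigr]
  =\frac{V(-R)}{V(s)}\,\E^{(z,s)}\bigl[f(Z_t,S_t)\mathbbm{1}_{\{\tau_{-R}\leq t\}}\bigr]
   +\frac1{V(s)}\,\E^{(z,s)}\bigl[V(S_t)f(Z_t,S_t)\mathbbm{1}_{\{\tau_{-R}>t\}}\bigr].
\]
As $R\to\infty$ the second summand tends to $V(s)^{-1}\E^{(z,s)}[V(S_t)f(Z_t,S_t)]$ by dominated convergence ($\mathbbm{1}_{\{\tau_{-R}>t\}}\uparrow1$ and $V(S_t)$ is integrable), and the first is at most $\|f\|_\infty\,V(-R)V(s)^{-1}\P^{(z,s)}(\tau_{-R}\leq t)$, which vanishes since a reflection-principle bound for $\inf_{[0,t]}S_u$ gives $\P^{(z,s)}(\tau_{-R}\leq t)\leq\exp(-(R+s)^2/(2\sigma_e^2 t))$ for large $R$, beating the exponential growth of $V(-R)$. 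This gives~\eqref{eq:semigroup.conditioned.ES}.

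The two conditioning identities and the hitting-probability formula $V(s)/V(-R)$ are routine; the step I expect to be the real obstacle is the last limiting argument for~\eqref{eq:semigroup.conditioned.ES}, namely showing that conditioning on the null event genuinely localises to the $h$-transform limit by controlling the mass that escapes through $\{\tau_{-R}\leq t\}$.
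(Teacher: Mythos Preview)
Your proof is correct and follows the same $h$-transform strategy as the paper, but with two differences in execution worth noting. First, the paper proves~\eqref{eq:semigroup.conditioned.ES} via the hitting-time limit $\{T_{-N}(S)<\infty\}$ and then declares~\eqref{eq:semigroup.conditioned.E} and~\eqref{eq:semigroup.conditioned.NE} ``analogous''; you instead handle those two directly by observing that $(U(Z_t))_{t\geq0}$ is a \emph{bounded} martingale with limit $U(0)\1_{\{Z_\infty=0\}}$, which immediately gives $\P^{(z,s)}(Z_\infty=0\mid\MCF_t)=U(Z_t)/U(0)$ and makes the conditioning elementary---this is cleaner, since these events have positive probability and no limiting procedure is needed. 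Second, for~\eqref{eq:semigroup.conditioned.ES} the paper writes the Markov-property step as $\E^{(z,s)}[f(Z_t,S_t)\,\P^{S_t}(T_{-N}(S)<\infty)]$ without isolating the contribution from $\{T_{-N}(S)\leq t\}$; you split that piece off explicitly and kill it with the Gaussian tail bound $V(-R)\,\P^{(z,s)}(\tau_{-R}\leq t)\to0$. That is the honest version of the limit, and your anticipation that this is ``the real obstacle'' is well placed. One minor remark: your reflection-principle estimate should carry a harmless factor of $2$, and your appeal to $Z_\infty\in\{0,\infty\}$ via Theorem~\ref{thm:exponential.growth} is not circular (that theorem does not use the present lemma), though the same conclusion also follows directly from the convergence of the bounded martingale $U(Z_t)$ and the strict monotonicity of $U$.
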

\begin{proof}
  Define the first hitting time $T_x(\eta):=\inf\{t\geq0\colon \eta_t=x\}$ of
  $x\in\R$ for every continuous path $\eta\in\C\rob{[0,\infty),\R)}$.
  As $V$ is a scale function for $(S_t)_{t\geq0}$, the optional sampling
  theorem implies that
  \begin{equation}
    \P^s\left(
        T_{-N}(S)<\infty
        \right)
    =\limK
    \P^s\left(
        T_{-N}(S)<T_K(S)
        \right)
    =\limK
      \frac{V(K)-V(s)}{V(K)-V(-N)}
    =\frac{V(s)}{V(-N)}
  \end{equation}
  for all $s\in\R$  and $N\in\N$,
  see Section $6$ in \cite{KT2} for more details.
  Thus we get that
  \begin{equation}  \begin{split}
    \E^{(z,s)}\left[ f(Z_t,S_t)\big| S_\infty=-\infty \right]
    &=\limN
    \E^{(z,s)}\left[ f(Z_t,S_t)\big| T_{-N}(S)<\infty \right]
    \\
    &=\limN
    \frac{\E^{(z,s)}\left[ f(Z_t,S_t)\P^{S_t}\left(T_{-N}(S)<\infty\right) \right]}
         {\P^{(z,s)}\left(T_{-N}(S)<\infty\right) }
    \\
    &=\frac{\E^{(z,s)}\left[ f(Z_t,S_t) V(S_t) \right]
          }{V(s)}
  \end{split}     \end{equation}
  for all $z\in[0,\infty)$, $s\in\R$  and $t\geq0$.
  The proof of the assertions~\eqref{eq:semigroup.conditioned.E}
  and~\eqref{eq:semigroup.conditioned.NE}
  is analogous.
  Note for the proof of~\eqref{eq:semigroup.conditioned.NE}
  that
  \begin{equation}  \begin{split}
    \P^z\left(Z_\infty>0\right)
    =
    \P^z\left(\limt Z_t=\infty\right)
    =\limN
    \P^z\left(
        T_{N}(Z)<T_0(Z)
        \right)
    =
    \frac{U(0)-U(z)}{U(0)}
  \end{split}     \end{equation}
  for every $z\in[0,\infty)$.
\end{proof}
\begin{proof}[\bf Proof of Theorem~\ref{thm:sup.conditioned.E}]
  It suffices to identify the generator $\check{\Gen}$ of the conditioned
  process. This generator is the time derivative of the
  semigroup of the conditioned process at $t=0$.
  Let $f\in\C^2_0\rob{[0,\infty)\times\R,\R}$
  be fixed.
  Define
  $f_z(z,s):=\deldz f(z,s)$,
  $f_s(z,s):=\delds f(z,s)$,
  $f_{zz}(z,s):=\delzdzz f(z,s)$,
  $f_{ss}(z,s):=\delzdsz f(z,s)$
  and
  $f_{zs}(z,s):=\delzdzds f(z,s)$
  for $z\in[0,\infty)$ and $s\in\R$.
  Lemma~\ref{l:semigroup} implies that
  \begin{equation}  \begin{split}  \label{eq:conditioned.calc.generator}
  \lefteqn{
   \check{\Gen}f(z,s)
   }\\
   &=\lim_{h\to0}\frac{\E^{(z,s)}\left[ U(Z_h) f(Z_h,S_h)-U(z)f(z,s) \right]/U(z)}{h}
   =\frac{\Gen(U\cdot f)(z,s)}{U(z)}
   \\
   &=\frac{1}{U(z)}\Big[\Big(\al+\frac{\sigma_e^2}{2}\Big)z
         \left(U^{'}f+Uf_z\right)(z,s)
         +\al \left(Uf_{s}\right)(z,s)
        +\frac{\sigma_e^2}{2}\left(U f_{ss}\right)(z,s)
   \\
   &\quad\quad+\frac{1}{2}\left(\sigma_e^2 z^2+\sigma_b^2z\right)
        \left(U^{''}f+2U^{'}f_z+Uf_{zz}\right)(z,s)
        +\sigma_e^2 z\left(U^{'}f_{s}+U f_{zs}\right)(z,s)
          \Big]
   \\
   &=\frac{1}{U(z)}\left(\Gen f(z,s)\right)U(z)
    +\frac{1}{U(z)}\left(\Gen U(z)\right)f(z,s)
   \\
   &\qquad
    + \left(\sigma_e^2 z^2+\sigma_b^2z\right)
        \left(\frac{U^{'}}{U}f_z\right)(z,s)
        +\sigma_e^2 z\left(\frac{U^{'}}{U}f_{s}\right)(z,s)
  \end{split}     \end{equation}
  for all $z\in[0,\infty)$ and $s\in\R$.
  Now we exploit that $\Gen U\equiv0$ and that
  $\tfrac{U^{'}}{U}(z)=-2\al/(\sigma_e^2 z +\sigma_b^2)$ for $z\in[0,\infty)$
  to obtain that
  \begin{equation}  \begin{split}
   \check{\Gen}f(z,s)
   &=\Gen f(z,s)-2\al z f_z(z,s)
      -2\al\frac{\sigma_e^2 z}{\sigma_e^2 z+\sigma_b^2}f_s(z,s)
   \\
   &=\left(-\al+\frac{\sigma_e^2}{2}\right)z f_z(z,s)
      +\frac{1}{2}\left(\sigma_e^2 z^2+\sigma_b^2 z\right)f_{zz}(z,s)
      \\
      &\qquad
      +\left(\al-2\al\frac{\sigma_e^2 z}{\sigma_e^2 z+\sigma_b^2}\right)
       f_s(z,s) 
      +\frac{1}{2}\sigma_e^2 f_{ss}(z,s)
      +\sigma_e^2z f_{zs}(z,s)
   \\
  \end{split}     \end{equation}
  for all $z\in[0,\infty)$, $s\in\R$
  and
  all $f\in\C^2_0\rob{[0,\infty)\times\R,\R}$.
  This is the generator of the process~\eqref{eq:Zvee}.
  Therefore the BDRE conditioned on extinction has the same distribution
  as the solution of~\eqref{eq:Zvee}.
\end{proof}
\begin{proof}[\bf Proof of Theorem~\ref{thm:sup.conditioned.ES}]
  As in the proof of Theorem~\ref{thm:sup.conditioned.E} we identify
  the generator $\bar{\Gen}$ of the BDRE conditioned on $\{S_\infty=-\infty\}$.
  Similar arguments as in~\eqref{eq:conditioned.calc.generator}
  and $\Gen V\equiv 0$ result in
  \begin{equation*}  \begin{split}  \label{eq:conditioned.calc.generator.ES}
  \lefteqn{
   \bar{\Gen}f(z,s)
   }\\
   &=\Gen f(z,s)
    + \frac{\sigma_e^2}{2}
        2\left(\frac{V^{'}}{V}f_s\right)(z,s)
        +\sigma_e^2 z\left(\frac{V^{'}}{V}f_{z}\right)(z,s)
   \\
   &=\Gen f(z,s)
        -2\al f_s(z,s)
        -2\al z f_z(z,s)
   \\
   &=\left(-\al+\frac{\sigma_e^2}{2}\right)zf_z(z,s)
      -\al f_s(z,s) 
      +\frac{\sigma_e^2 z^2+\sigma_b^2 z}{2}f_{zz}(z,s)
      +\frac{\sigma_e^2}{2}f_{ss}(z,s)
      +\sigma_e^2zf_{zs}(z,s)
  \end{split}     \end{equation*}
  for all $z\in[0,\infty)$, $s\in\R$
  and
  all $f\in\C^2_0\rob{[0,\infty)\times\R,\R}$.
  This is the generator of the BDRE with criticality parameter $-\al$.
\end{proof}
\begin{proof}[\bf Proof of Theorem~\ref{thm:asymptotic_survival}]
  The assertion follows from Corollary~\ref{thm:sup.conditioned.E.alone}
  and from Theorem 5 of B\"o\-ing\-hoff and Hutzenthaler
  (2011)\nocite{BoeinghoffHutzenthaler2011pre}.
\end{proof}
\begin{proof}[\bf Proof of Theorem~\ref{thm:exponential.growth}]
  It\^{o}'s formula implies that
  \begin{equation}  \begin{split}
    d\frac{Z_t}{e^{S_t}}
    &=e^{-S_t}dZ_t-e^{-S_t}Z_tdS_t+\frac{1}{2}e^{-S_t}Z_t\sigma_e^2\,dt
     -e^{-S_t}Z_t\sigma_e^2\,dt
    \\
    &= e^{-S_t}\frac{\sigma_e^2}{2}Z_t\,dt
      +e^{-S_t}\sqrt{\sigma_b^2 Z_t}dW_t^{(b)}
      +\frac{1}{2}e^{-S_t}Z_t\sigma_e^2\,dt
      -e^{-S_t}Z_t\sigma_e^2\,dt
    \\
    &= e^{-S_t}\sqrt{\sigma_b^2 Z_t}dW_t^{(b)}
  \end{split}     \end{equation}
  for all $t\geq0$.
  Therefore $\left(Z_t/\exp(S_t)\right)_{t\geq0}$ is a nonnegative martingale.
  The martingale convergence theorem implies the existence
  of a random variable $Y\colon\Omega\to[0,\infty)$ such that
  \begin{equation} 
    \frac{Z_t}{e^{S_t}}\lra Y\qquad\text{as }t\to\infty
    \quad\text{almost surely.}
  \end{equation}
  If $\al\leq0$, then $Z_\infty=0$ almost surely, which implies
  $Y=0$ almost surely.

  It remains to determine the distribution of $Y$ in the supercritical regime $\al>0$.
  Fix $z\in[0,\infty)$ and $\ld\in[0,\infty)$.
  Dufresne (1990)\nocite{Dufresne1990} (see also~\cite{Yor1992JAP}) showed that
  \begin{equation}
    \int_0^\infty \exp\left(-\al s-\sigma_e W_{s}^{(e)}\right)\,ds
    \overset{\rm d}{=} \frac{2}{\sigma_e^2}G_{\frac{2\al}{\sigma_e^2}}.
  \end{equation}
  Moreover we exploit an explicit formula for the Laplace transform
  of the BDRE~\eqref{eq:BDRE} conditioned on the environment, see
  Corollary 3 of B\"oinghoff and Hutzenthaler (2011)\nocite{BoeinghoffHutzenthaler2011pre}.
  Thus we get that
  \begin{equation}  \begin{split}
    \E^z\left[\exp\left(
      -\ld Y
    \right)\right]
    &=
    \limt
    \E^z\left[\exp\left(
      -\ld\frac{Z_t}{e^{S_t}}
    \right)\right]
    =
    \limt
    \E^z\left[
      \E^z\left[\exp\left(
      -\ld\frac{Z_t}{e^{S_t}}
    \right)\Big|\left(S_s\right)_{s\in[0,t]}\right]
    \right]
    \\
    &=
    \limt
    \E\left[
    \exp\bigg(-\frac{z}{\int_0^t \frac{\sigma_b^2}{2}
         \exp\big(-S_s \big) ds
         + \frac{\exp(S_t)}{\lambda} \exp(-S_t)
         }
         \bigg)
    \right]
    \\
    &=
    \E\bigg[
    \exp\bigg(-\frac{z}{ \frac{\sigma_b^2}{2}\int_0^\infty
         \exp\big(-\al s-\sigma_e W_s^{(e)} \big) ds
         + \frac{1}{\lambda} 
         }
         \bigg)
    \bigg]
    \\
    &=
    \E\bigg[
    \exp\bigg(-\frac{z}{ \frac{\sigma_b^2}{\sigma_e^2}G_{2\al/\sigma_e^2}
         + \frac{1}{\lambda} 
         }
         \bigg)
    \bigg].
  \end{split}     \end{equation}
  This shows~\eqref{eq:Laplace.Y}.
  Letting $\ld\to\infty$ we conclude that
  \begin{equation}
    \P^z\left(Y=0\right)=
    \E\bigg[
    \exp\bigg(-\frac{z}{ \frac{\sigma_b^2}{\sigma_e^2}G_{2\al/\sigma_e^2}
         }
         \bigg)
       \bigg]
    =\P^z\left(Z_\infty=0\right).
  \end{equation}
  The last equality follows from Theorem 5 of \cite{BoeinghoffHutzenthaler2011pre}.
\end{proof}
\begin{proof}[\bf Proof of Theorem~\ref{thm:sup.conditioned.NE}]
  Analogous to the proof of Theorem~\ref{thm:sup.conditioned.E}, we identify
  the generator $\hat{\Gen}$ of the BDRE conditioned on $\{Z_\infty>0\}$.
  Note that
  \begin{equation}
    \frac{-U^{'}(z)}{U(0)-U(z)}
    =\frac{2\al}{\sigma_e^2z+\sigma_b^2}
     \frac{U(z)}{U(0)-U(z)}
  \end{equation}
  for all $z\in[0,\infty)$.
  Similar arguments as in~\eqref{eq:conditioned.calc.generator}
  and $\Gen U\equiv 0$ result in
  \begin{equation*}  \begin{split}  \label{eq:conditioned.calc.generator.NE}
  \lefteqn{
   \hat{\Gen}f(z,s)
   }\\
   &=\Gen f(z,s)
    + \left(\sigma_e^2 z^2+\sigma_b^2z\right)
         \frac{-U^{'}(z)}{U(0)-U(z)}f_z(z,s)
        +\sigma_e^2 z \frac{-U^{'}(z)}{U(0)-U(z)}f_{s}(z,s)
   \\
   &=\left(\al+2\al
      \frac{U(z)}{U(0)-U(z)}
      + \frac{\sigma_e^2}{2}
      \right)zf_z(z,s)
      +\left(\al+\sigma_e^2 z\frac{2\al}{\sigma_e^2 z+\sigma_b^2}
       \frac{U(z)}{U(0)-U(z)}
       \right)
       f_s(z,s) 
   \\
   &\qquad
      +\frac{\sigma_e^2 z^2+\sigma_b^2 z}{2}f_{zz}(z,s)
      +\frac{\sigma_e^2}{2}f_{ss}(z,s)
      +\sigma_e^2zf_{zs}(z,s)
  \end{split}     \end{equation*}
  for all $z\in[0,\infty)$, $s\in\R$
  and
  all $f\in\C^2_0\rob{[0,\infty)\times\R,\R}$.
  Comparing with~\eqref{eq:Zhat}, we see that $\hat{\Gen}$ 
  is the generator of~\eqref{eq:Zhat} which
  implies~\eqref{eq:sup.conditioned.NE}.
  Inserting $d\hat{S}_t$ into the equation of $d\hat{Z}_t$
  for $t\in[0,\infty)$ shows that
  $(\hat{Z}_t)_{t\geq0}$ solves the SDE~\eqref{eq:Zhat.quenched}.
\end{proof}
\appendix
\section*{Acknowledgement}
We thank two anonymous referees for very helpful comments and suggestions.
\def\cprime{$'$} \hyphenation{Sprin-ger}

\end{document}